\newcommand{\setbuilder}[2]{\left\{#1\ \colon #2\right\}}
\DeclareMathOperator{\Span}{span}
\DeclareMathOperator{\rank}{rank}
\newtheorem{lemma}{Lemma}
\newtheorem{theorem}{Theorem}
\newtheorem{corollary}{Corollary}
\newtheorem{conjecture}{Conjecture}
\theoremstyle{definition}
\title{Large equilateral sets in subspaces of $\ell_\infty^n$ of small codimension}
\author{N\'ora Frankl\thanks{{\footnotesize London School of Economics, and Moscow Institute of Physics and Technology. {\tt n.frankl@lse.ac.uk}. Supported by the Ministry of Education and Science of the Russian Federation in the framework of MegaGrant no. 075-15-2019-1926 and by the National Research, Development,
and Innovation Office, NKFIH Grant K119670. }}}
\begin{document}
\date{}
\maketitle

\begin{abstract}
For fixed $k$ we prove exponential lower bounds on the equilateral number of subspaces of $\ell_{\infty}^n$ of codimension $k$. In particular, we show that if the unit ball of a normed space of dimension $n$ is a centrally symmetric polytope with at most $\frac{4n}{3}-o(n)$ pairs of facets, then it has an equilateral set of cardinality at least $n+1$.
These include subspaces of codimension $2$ of $\ell_{\infty}^{n+2}$ for $n\geq 9$ and of codimension $3$ of $\ell_{\infty}^{n+3}$ for $n\geq 15$.
\end{abstract}

\section{Introduction}
Let $(X, \| \cdot \|)$ be a normed space. A set $S\subseteq X$ is called \emph{$c$-equilateral} if $\|x-y\|=c$ for all distinct $x,y\in S$. $S$ is called \emph{equilateral} if it is $c$-equilateral for some $c>0$. The \emph{equilateral number} $e(X)$ of $X$ is the cardinality of the largest equilateral set of $X$. Petty \cite{Pet71} made the following conjecture regarding lower bounds on $e(X)$.

\begin{conjecture}[Petty\cite{Pet71}]\label{petty} For all normed spaces $X$ of dimension $n$, $e(X)\geq n+1$.
\end{conjecture}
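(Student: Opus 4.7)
The plan is to attack Petty's conjecture via a topological deformation argument, using the Euclidean norm as a base case where the regular simplex supplies $n+1$ equilateral points. Since the set of norms on $\R^n$ is convex in $C(S^{n-1})$, I would embed the given norm $\|\cdot\|$ into a path $\|\cdot\|_t$, $t\in[0,1]$, with $\|\cdot\|_0$ Euclidean and $\|\cdot\|_1=\|\cdot\|$, and then try to continue the Euclidean equilateral configuration all the way to $t=1$.

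Concretely, I would work on the configuration space $\mathcal{C}$ of ordered tuples $(x_0,\ldots,x_n)\in(\R^n)^{n+1}$ normalized by $x_0=0$ and $\|x_1\|_t=1$, and consider the map $\Phi_t:\mathcal{C}\to\R^{\binom{n+1}{2}-1}$ recording the $\binom{n+1}{2}-1$ remaining pairwise $\|\cdot\|_t$-distances. An equilateral set of cardinality $n+1$ corresponds to a zero of $\Phi_t - (1,\ldots,1)$. At $t=0$ the regular simplex is such a zero, and on the open locus where $\Phi_t$ is $C^1$ its Jacobian at the simplex is nondegenerate modulo the action of $O(n)$, so the implicit function theorem continues the solution locally in $t$. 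The next step is to show that the connected component of this solution in $\mathcal{C}\times[0,1]$ is proper over $[0,1]$: this requires ruling out (a) collisions $x_i=x_j$, which force some pairwise distance to $0$ and hence contradict the equality to $1$, and (b) the configuration becoming unbounded, which after a compactness argument on the unit ball of the dual norm can be reduced to (a) as well.

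The main obstacle, and the reason this conjecture has stood open for half a century, is that for non-smooth norms $\Phi_t$ fails to be $C^1$ on a positive-measure set of configurations, and the tracked branch can run into this singular locus before reaching $t=1$. A related difficulty is that for norms whose unit ball has flat facets (such as $\ell_\infty^n$ and its subspaces, which the paper studies) the equilateral locus is genuinely high-dimensional, and may deform within this higher-dimensional family in a way that does not survive small perturbations, so transversality at $t=0$ cannot be propagated. Overcoming this would require either an effective perturbation theorem promoting every norm to a $C^1$ norm while controlling equilateral sets, or a new global topological invariant, equivariant under $S_{n+1}$ acting on $\mathcal{C}$, that certifies the existence of an equilateral $(n{+}1)$-set for every norm — something on the order of a Borsuk–Ulam obstruction adapted to the distance map $\Phi_t$.

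Given that none of the ingredients above are currently known in the required generality, my honest expectation is that this strategy will succeed only under additional structural hypotheses on the norm; delivering a genuinely general proof is, at present, the core open problem that the paper's special-case results circumnavigate.
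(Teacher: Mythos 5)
The statement you were asked about is Conjecture~\ref{petty}, which the paper does not prove and which remains open for $n\geq 5$; accordingly, your text is not a proof but a strategy sketch, and by your own admission it does not close the argument. The gap is exactly where you locate it, and it is fatal as written: the continuation scheme needs (i) smoothness of the distance map $\Phi_t$ along the whole path of norms, which fails for non-smooth norms (and the hardest cases, such as polytopal norms like subspaces of $\ell_\infty^n$, are precisely the non-smooth ones); (ii) properness of the tracked solution branch over $[0,1]$, for which you give no argument beyond ruling out collisions --- the branch can simply turn back in $t$ or run into the singular locus, and there is no degree-type or Borsuk--Ulam-type invariant currently known that certifies a zero at $t=1$; and (iii) even the transversality bookkeeping is delicate, since for $t>0$ the norm generically has no continuous isometry group, so the ``modulo $O(n)$'' nondegeneracy at the Euclidean endpoint does not propagate, and the equilateral locus is a positive-dimensional set whose deformation you cannot control. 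So the proposal does not establish the statement, and no amount of local repair fixes it without a genuinely new global idea.

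For contrast, the paper never attempts such a global deformation. Its results are special cases of the conjecture obtained by explicit constructions: for an $(n-k)$-dimensional subspace $X\subseteq\ell_\infty^n$ one prescribes most coordinates of the candidate points from $\{0,\pm 1\}$ (or $\{0,-\tfrac12\}$), solves for the remaining $k$ (or $2k\ell$) coordinates by Cramer's rule, and uses a maximal-determinant choice of columns of the defining matrix $A$ together with multilinearity of the determinant to force those residual coordinates to be small enough not to violate the $\ell_\infty$ distance; a pigeonhole/splitting step then extracts an exponentially large equilateral subset. The only topological ingredient in the paper is a Brouwer fixed-point perturbation (Theorem~\ref{close norms}), and it is local in the Banach--Mazur sense --- it transfers the construction to norms at small distance from such a subspace, which is the rigorous analogue of the continuity intuition behind your path argument, but it cannot be pushed along an entire path from the Euclidean norm to an arbitrary norm. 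If you want to contribute in the direction of the paper, the realistic targets are the explicit-construction side (e.g.\ improving the dependence on the codimension $k$), not a general topological proof of Conjecture~\ref{petty}.
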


Petty \cite{Pet71} proved Conjecture~\ref{petty} for $n=3$, and Makeev \cite{Makeev} for $n=4$. For $n\geq 5$ the conjecture is still open, except for some special classes of norms. The best general lower bound is $e(X)\geq\exp(\Omega(\sqrt{\log n}))$, proved by Swanepoel and Villa \cite{SV08}. Regarding upper bounds on the equilateral number, a classical result of Petty \cite{Pet71} and Soltan \cite{Sol75} shows that $e(X)\leq 2^n$ for any $X$ of dimension $n$, with equality if and only if the unit ball of $X$ is an affine image of the $n$-dimensional cube. For more background on the equilateral number see Section~$3$ of the survey \cite{Swa17}.

The norm $\|\cdot \|_{\infty}$ of $x\in \mathbb{R}^n$ is defined as $\|x \|_{\infty}=\max_{1\leq i \leq n} |x_i|$, and $\ell_{\infty}^n$ denotes the  normed space $(\mathbb{R}^n, \|\cdot\|_{\infty})$. In \cite{Kobos} Kobos studied subspaces of $\ell_{\infty}^n$ of codimension $1$, and proved the lower bound $e(X)\geq 2^{\lfloor\frac{n}{2}\rfloor}$, which in particular implies Conjecture~\ref{petty} for these spaces for $n\geq 6$.

In the same paper he proposed as a problem to prove Petty's conjecture for subspaces of $\ell_{\infty}^n$ of codimension $2$. In Theorem~\ref{main} we prove exponential lower bounds on the equilateral number of subspaces of $\ell_{\infty}^n$ of codimension $k$. This, in particular, solves Kobos' problem if $n\geq 9$. 

\begin{theorem}\label{main} Let $X$ be a $(n-k)$-dimensional subspace of $\ell_\infty^n$. Then
\begin{flalign}\label{first}
\displaystyle e(X) \geq \frac{2^{n-k}}{(n-k)^k}, &&&
\end{flalign}
\vspace{-8pt}
\begin{flalign}\label{second}
e(X)\geq 1+\frac{1}{2^{k-1}} \displaystyle\sum_{r=1}^\ell\binom{n-k\ell}{r} \textrm{ for every } 1\leq \ell \leq n/(k+1), \textrm{ and} &&&
\end{flalign}
\vspace{-8pt}
\begin{flalign}\label{third}
\displaystyle e(X) \geq 1+ \sum_{r=1}^{\ell}\binom{n-2k\ell}{r} \textrm{ for every } 1\leq \ell \leq n/(2k+1). &&&
\end{flalign}
\end{theorem}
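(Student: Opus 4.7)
Write $X=\ker A$ for a $k\times n$ matrix $A$ of full row rank, and choose $B\subseteq[n]$ of size $k$ maximizing $|\det A_B|$. By Cramer's rule the coefficients of the expansion $a_i=\sum_{j\in B}c_{ij}a_j$ satisfy $|c_{ij}|\le 1$ for every $i\in F:=[n]\setminus B$. Writing $c_i:=(c_{ij})_{j\in B}\in[-1,1]^k$, every $\xi\in\mathbb{R}^F$ then lifts uniquely to $p(\xi)\in X$ with $p(\xi)|_F=\xi$ and $p(\xi)|_B=-\sum_{i\in F}\xi_ic_i$, so that $\|p(\xi)-p(\xi')\|_\infty=\max\!\bigl(\|\xi-\xi'\|_\infty,\bigl\|\sum_i(\xi_i-\xi'_i)c_i\bigr\|_\infty\bigr)$. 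Each of the three bounds will follow by applying this lift to a carefully chosen family of $\xi$'s and a pigeonhole on the pivot part.

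\textbf{Inequality (1).} I apply the lift to all $\xi\in\{0,1\}^F$. Each pivot coordinate $p(\xi)|_{B,j}$ ranges over an interval of length $\sum_i|c_{ij}|\le n-k$, so the image of $\xi\mapsto p(\xi)|_B$ fits in a $k$-box decomposing into at most $(n-k)^k$ unit cubes. A cube containing at least $2^{n-k}/(n-k)^k$ lifts yields, after translating one of them to the origin, a $1$-equilateral subset of $X$: the free parts of the differences lie in $\{-1,0,1\}^F\setminus\{0\}$ (so have $\ell_\infty$-norm exactly $1$) while the pivot parts have $\ell_\infty$-norm at most $1$.

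\textbf{Inequalities (2) and (3).} I refine the same idea by restricting $\xi$ to indicator vectors $\mathbb{1}_R$ of subsets $R$ of size at most $\ell$ supported on a reserved sub-block $F'\subseteq F$. For (2) I take $|F'|=n-k\ell$, so that $\psi(R):=\sum_{i\in R}c_i$ lies in $[-\ell,\ell]^k$; to obtain the denominator $2^{k-1}$ rather than the naive $(2\ell)^k$ I also allow the negated lifts $-p(\mathbb{1}_R)$ and pigeonhole over the $2^{k-1}$ antipodal pairs of sign-orthants of $[-\ell,\ell]^k$, using within each pair the involution $v\leftrightarrow -v$ to keep the pivot parts of all pairwise differences inside $[-1,1]^k$. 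For (3) I instead reserve $2k\ell$ coordinates and use the extra pivot freedom to cancel the sign of each pivot-coordinate contribution pointwise; no antipodal identification is then needed, so the full count $\sum_{r=1}^\ell\binom{n-2k\ell}{r}$ of indicator subsets survives. The technical crux, and the main obstacle I anticipate, is executing the antipodal pigeonhole for (2): one has to show that within each antipodal pair of orthants the admissible $R$'s can be chosen so that both $v_R-v_{R'}$ and $v_R+v_{R'}$ have pivot parts in $[-1,1]^k$, which is where the maximality of $|\det A_B|$ and the $|c_{ij}|\le 1$ bound are essential.
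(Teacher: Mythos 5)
Your kernel-lift setup (pivot block $B$ maximizing $|\det A_B|$, Cramer coefficients $|c_{ij}|\le 1$, pigeonhole on the pivot part) is exactly the paper's framework, and your proof of \eqref{first} is correct and essentially identical to the paper's (the paper uses $\{\pm1\}$ free coordinates and cells of side $2$ instead of $\{0,1\}$ and unit cells, a pure rescaling). The gaps are in \eqref{second} and \eqref{third}.

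For \eqref{second}, the "technical crux" you flag is a genuine gap, and the route you propose cannot close it. With single-column pivots and only $|c_{ij}|\le 1$, the pivot part $\psi(R)=\sum_{i\in R}c_i$ can have coordinates as large as $\ell$, and knowing that two such vectors lie in the same orthant (or antipodal pair of orthants) does not put their difference in $[-1,1]^k$: e.g.\ $(\ell,0,\dots,0)$ and $(0,\dots,0)$ share an orthant but differ by $\ell$, so no orthant pigeonhole, antipodal or not, bounds pairwise pivot differences by $1$ once $\ell\ge 2$. Allowing negated lifts creates a second problem: for overlapping $R,R'$ the free part of $p(\mathbf{1}_R)+p(\mathbf{1}_{R'})$ has an entry of absolute value $2$, so mixing $p$ and $-p$ in one equilateral set violates the $\le 1$ constraint outright. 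The paper's fix is not a cleverer pigeonhole but a different pivot basis: it maximizes $|\det|$ over matrices whose $k$ columns are signed sums $\mathbf{b}_{I_1,\bm{\sigma}},\dots,\mathbf{b}_{I_k,\bm{\sigma}}$ of up to $\ell$ disjoint original columns; replacing one such column by $\mathbf{b}_{J,\bm{\sigma}}$ (with $|J|\le\ell$ disjoint from the $I_i$) is again an admissible matrix, so every pivot coordinate of every lift already lies in $[-1,1]$ and is constant up to signs on each block $I_j$, and then a same-sign pigeonhole over blocks does bound differences by $1$; the factor $2^{k-1}$ rather than $2^k$ comes from an additional normalization of $\bm{\sigma}$ on the free coordinates forcing the $k$-th block's coordinates to be nonnegative. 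For \eqref{third}, your sketch omits the actual mechanism: the paper reserves $2\ell$ pivot blocks of size $k$ with nested maximality conditions chosen from the last block backwards, assigns to the $i$-th smallest element of $J$ its own pair of blocks, and splits the $-1$ at that element into two $-\tfrac12$'s compensated in those two blocks, so every pivot coordinate of $\mathbf{y}(J)$ has absolute value at most $\tfrac12$ and all pairwise pivot differences are at most $1$; "cancel the sign of each pivot-coordinate contribution pointwise" does not describe a working construction, and you would also need the rank-based fallback the paper gives for the case where some block determinant vanishes.
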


Note that none of the three bounds follows from the other two in Theorem~\ref{main}, hence none of them is redundant. Comparing \eqref{first} and \eqref{third}, for fixed $k$ we have \mbox{$\max_{\ell} \sum_{1\leq r \leq \ell}\binom{n-2k\ell}{r}=O( 2^{c_kn})$} for some $0<c_k <1$, while $\frac{2^{n-k}}{(n-k)^k}=2^{n-k-k\log(n-k)}=2^{n-o(n)}$.  On the other hand, when we let $k$ vary, it can be as large as $\Omega(n)$ in \eqref{third} to still give a non-trivial estimate, while $k$ can only be chosen up to $O(n/\log n)$ for \eqref{first} to be non-trivial. Finally, \eqref{second} is beaten by \eqref{first} and \eqref{third} in most cases, however for $k=2, 3$ and for small values of $n$ \eqref{second} gives the best bound. 

For two $n$-dimensional normed spaces $X$,$Y$ we denote by $d_{BM}(X,Y)=\inf_T\{\|T\|\|T^{-1}\|\}$ their \emph{Banach-Mazur distance}, where the infimum is over all linear isomorphisms \mbox{$T:X\to Y$}. The metric space of isometry classes of normed spaces endowed with the logarithm of the Banach-Mazur distance is the \emph{Banach-Mazur compactum}. It is not hard to see that $e(X)$ is upper semi-continuous on the Banach-Mazur compactum. This, together with the fact that any convex polytope can be obtained as a section of a cube of sufficiently large dimension (see for example Page $72$ of Grünbaum's book \cite{Grunbaum}) implies that it would be sufficient to prove Conjecture~\ref{petty} for $k$-codimensional subspaces of $\ell_{\infty}^n$ for all $1\leq k \leq n-4$ and $n\geq 5$. (This was also pointed out in \cite{Kobos}.) Unfortunately, our bounds are only non-trivial if $n$ is sufficiently large compared to $k$. However, we deduce an interesting corollary.

\begin{corollary}\label{poly} Let $P$ be an origin-symmetric convex polytope in $\mathbb{R}^d$ with at most $\frac{4d}{3}-\frac{1+\sqrt{8d+9}}{6}=\frac{4d}{3}-o(d)$ opposite pairs of facets. If $X$ is a $d$-dimensional normed space with $P$ as a unit ball, then $e(X)\geq d+1$.
\end{corollary}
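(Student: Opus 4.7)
My first step is to realize $X$ as a subspace of $\ell_\infty^n$. Writing the $n$ opposite pairs of facets of $P$ in the form $\{x\in\R^d:\langle v_i,x\rangle=\pm 1\}$ for suitable vectors $v_1,\dots,v_n\in\R^d$, one has $P=\{x:|\langle v_i,x\rangle|\le 1\text{ for all }i\}$, so the linear map $x\mapsto(\langle v_i,x\rangle)_{i=1}^n$ is an isometric embedding of $X$ into $\ell_\infty^n$. Hence $X$ is a $(n-d)$-codimensional subspace of $\ell_\infty^n$.

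Setting $k:=n-d$, the plan is to apply Theorem~\ref{main}\eqref{third} with $\ell=2$. This is the natural choice of $\ell$ and of bound, since \eqref{first} needs $k$ to be logarithmic in $d$ and \eqref{second} needs $k$ to be bounded, whereas the hypothesis of the corollary allows $k$ to grow linearly (as $d/3-O(\sqrt d)$). Admissibility of $\ell=2$ in \eqref{third} requires $n\ge 4k+2$; the hypothesis implies $k<(d-2)/3$ after a short estimate, so $n=d+k<(4d-2)/3$ and the range condition is comfortably met. The bound then reads
\[
e(X)\;\ge\;1+(n-4k)+\binom{n-4k}{2}\;=\;1+\tfrac12 m(m+1),\qquad m:=n-4k=d-3k.
\]

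The last step is to confirm that $\tfrac12 m(m+1)\ge d$ under the hypothesis. Rearranging $n\le\tfrac{4d}{3}-\tfrac{1+\sqrt{8d+9}}{6}$ yields $2m-1\ge\sqrt{8d+9}$; squaring gives $m(m-1)\ge 2d+2$, and therefore $m(m+1)=m(m-1)+2m\ge 2d+2+2m>2d$, delivering $e(X)\ge d+1$.

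The only nontrivial ingredient is Theorem~\ref{main} itself; once it is in hand the corollary reduces to short algebraic bookkeeping. The mildly awkward form of the threshold $\tfrac{4d}{3}-\tfrac{1+\sqrt{8d+9}}{6}$ is chosen precisely so that the resulting inequality on the integer $m$ takes the clean form $m(m-1)\ge 2d+2$, which is what matters for the argument; matching this constant exactly is the only point that needs a little care.
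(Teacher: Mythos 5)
Your proposal is correct and follows essentially the same route as the paper: realize $X$ isometrically as a codimension-$k$ subspace of $\ell_\infty^n$ via the facet functionals (this is exactly Lemma~\ref{section}, which the paper cites as well known) and then apply bound \eqref{third} of Theorem~\ref{main} with $\ell=2$. Your explicit verification that $m(m-1)\geq 2d+2$ (and the check of the range condition $n\geq 4k+2$) fills in the arithmetic the paper leaves implicit in the line ``This yields $e(X)\geq d+1$.''
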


There have been some extensions of lower bounds obtained on the equilateral number of certain normed spaces to other norms that are close to them according to the Banach-Mazur distance. These results are based on using the Brouwer Fixed-Point Theorem, first applied in this context by Brass \cite{Bras} and Dekster \cite{Dekster}. We prove the following.

\begin{theorem}\label{close norms} Let $X$ be an $(n-k)$-dimensional subspace of $\ell_{\infty}^n$, and $Y$ be an $(n-k)$-dimensional normed space such that $d_ {BM}(X,Y)\leq 1+\frac{\ell}{2(n-2k-\ell k-1)}$ for some integer $1\leq \ell \leq \frac{n-2k}{k}$. Then $e(Y)\geq n-k(2+\ell)$.
\end{theorem}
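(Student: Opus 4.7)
The plan is to transport the equilateral set built inside $X$ by the construction behind Theorem~\ref{main} to the nearby space $Y$, via a Brouwer fixed-point perturbation of Brass--Dekster type, as alluded to in the paragraph preceding the statement. Choose a linear isomorphism $T\colon X \to Y$ realising $d_{BM}(X,Y)$, normalised so that $\|Tx\|_Y \le \|x\|_X \le (1+\delta)\|Tx\|_Y$ for all $x\in X$, where $\delta = \ell/\bigl(2(n-2k-\ell k-1)\bigr)$.

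From the proof of Theorem~\ref{main}, in particular the $\ell_\infty^n$-construction underlying \eqref{third}, I would extract $N = n - k(2+\ell)$ explicit $1$-equilateral points $p_0,\dots,p_{N-1}\in X$ whose coordinates take values in a fixed finite set. Since $N$ is strictly smaller than the bound on $e(X)$ produced by \eqref{third}, there is room to arrange an additional \emph{redundancy property}: for every pair $(i,j)$ the value $\|p_i - p_j\|_\infty = 1$ is attained on at least $\ell + 1$ distinct coordinates of $[n]$, with a uniform gap on the remaining coordinates. Set $q_i^{0} := T p_i \in Y$; the normalisation of $T$ then gives $(1+\delta)^{-1} \le \|q_i^{0} - q_j^{0}\|_Y \le 1$, so the $q_i^{0}$ form an almost-$1$-equilateral set in $Y$.

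On the compact convex product $K = \prod_{i=1}^{N-1}\bigl(q_i^{0} + \rho\,\overline{B}_Y\bigr)$, with $\rho$ of order $\delta$, I construct a continuous self-map $F$ whose fixed points are genuine $1$-equilateral $N$-tuples in $Y$ (with $q_0 = 0$ fixed). A natural choice adjusts each $q_i$ along the directions corresponding to the redundant active coordinates identified in the previous step so as to bring each pairwise $Y$-distance closer to~$1$; continuity is immediate. The crucial verification is $F(K)\subseteq K$: a $\delta$-size perturbation of each $q_i$ shifts each pairwise $Y$-distance by at most $O(\delta)$, and there are at most $(N-1)(n-2k-\ell k-1)$ distance constraints to correct, so the aggregate perturbation must fit inside the $\rho$-ball. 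The hypothesis $\delta \le \ell/\bigl(2(n-2k-\ell k-1)\bigr)$ is calibrated precisely so that the $\ell$-fold redundancy in active coordinates, secured in the previous paragraph, supplies enough slack for this balance to work. Brouwer then yields a fixed point, i.e.\ a genuine $1$-equilateral set of size $n - k(2+\ell)$ in $Y$. The main obstacle is this quantitative self-map verification; all other steps — extracting the structured equilateral set from the proof of Theorem~\ref{main}, continuity of $F$, and the conclusion from Brouwer — should follow routine lines once the central inequality is set up.
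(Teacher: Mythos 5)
Your high-level strategy (Brouwer fixed point in the spirit of Brass--Dekster, with the constant $\delta=\ell/(2(n-2k-\ell k-1))$) is the right one and matches the paper's, but the proposal has a genuine gap exactly where the real work lies. The paper does not perturb an equilateral set of $X$ inside balls of $Y$ and does not use any ``redundancy of active coordinates''. Instead, the fixed-point variable is the vector of prescribed excesses $\bm{\varepsilon}=(\varepsilon^i_j)_{i<j}\in[0,c]^{M}$, and for \emph{every} such $\bm{\varepsilon}$ one must construct points $\mathbf{p}_j(\bm{\varepsilon})$ lying exactly in the subspace $X$ whose first $N$ coordinates follow an upper-triangular pattern ($-1$ at position $j$, $\varepsilon^i_j$ at positions $i<j$, $0$ above), so that $\|\mathbf{p}_s(\bm\varepsilon)-\mathbf{p}_t(\bm\varepsilon)\|_\infty=1+\varepsilon^s_t$ holds identically, and whose remaining $k(2+\ell)$ coordinates (forced by the $k$ linear constraints, via Cramer's rule) stay bounded by $\tfrac12$. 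This is why $k(2+\ell)$ coordinates are sacrificed: two blocks of $k$ tail coordinates absorb the $-\tfrac12$ entries, and $\ell$ further blocks absorb the $\varepsilon$-entries, each block receiving total mass at most $(N-1)c/\ell=\tfrac12$ thanks to the determinant-maximality conditions. The map $\varphi^i_j(\bm\varepsilon)=1+\varepsilon^i_j-\|\mathbf{p}_i(\bm\varepsilon)-\mathbf{p}_j(\bm\varepsilon)\|_Y$ then maps $[0,c]^M$ into itself by the norm comparison alone, and a fixed point gives $\|\mathbf{p}_i-\mathbf{p}_j\|_Y=1$. None of this construction appears in your proposal, and it is the heart of the proof.

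Concretely, two of your steps would fail or are unsupported as stated. First, the ``redundancy property'' (every pair attains its $\ell_\infty$-distance on at least $\ell+1$ coordinates with a uniform gap elsewhere) does not follow from $N$ being smaller than the bound in \eqref{third}: in that construction the distance between $\mathbf{y}(J_1)$ and $\mathbf{y}(J_2)$ is attained precisely on $J_1\triangle J_2\subseteq[N]$, which for natural choices (e.g.\ singletons) has size $1$ or $2$, and no argument is given that a family of $N$ sets with pairwise symmetric differences of size at least $\ell+1$ and the required uniform gap can be arranged. Second, and more importantly, your map $F$ on $K=\prod_i(q_i^0+\rho\overline{B}_Y)$ is never defined: there is no argument that its fixed points are $1$-equilateral $N$-tuples, and the self-map verification --- which you yourself flag as the main obstacle --- is precisely the quantitative content that the hypothesis on $d_{BM}$ must deliver. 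In the paper this verification is a two-line computation because the unknowns are the excesses $\varepsilon^i_j$ rather than the points themselves; with points as unknowns you would additionally need to keep track of how moving $q_i$ in $Y$ interacts with the $\ell_\infty$ structure of the subspace, which your sketch does not address. So the proposal identifies the correct framework but leaves the essential construction and the fixed-point mechanism unproved.
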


\section{Norms with polytopal unit ball and small codimension}

We recall the following well known fact to prove Corollary~\ref{poly}. (For a proof, see for example \cite{Ball}.)

\begin{lemma}\label{section} Any centrally symmetric convex $d$-polytope with $f\geq d$ opposite pairs of facets is a $d$-dimensional section of the $f$-dimensional cube.
\end{lemma}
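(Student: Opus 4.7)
The plan is to realize $P$ explicitly as the preimage of the cube under a linear embedding, by using the facet normals to construct an embedding $\mathbb R^d \hookrightarrow \mathbb R^f$.

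First, I would use the central symmetry to write $P$ as an intersection of slabs. Since each of the $f$ pairs of opposite facets is cut out by two parallel hyperplanes symmetric about the origin, after scaling there exist vectors $a_1,\dots,a_f\in\mathbb R^d$ such that
\[
P=\{x\in\mathbb R^d : |\langle a_i,x\rangle|\le 1 \text{ for all } 1\le i\le f\}.
\]
Define the linear map $T:\mathbb R^d\to\mathbb R^f$ by $T(x)=(\langle a_1,x\rangle,\dots,\langle a_f,x\rangle)$. By construction, $T(x)\in[-1,1]^f$ if and only if $x\in P$, so
\[
T(P)=T(\mathbb R^d)\cap[-1,1]^f.
\]

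Second, I would verify that $T$ is injective, which forces $T(\mathbb R^d)$ to be a $d$-dimensional linear subspace of $\mathbb R^f$. If some nonzero $x_0$ satisfied $T(x_0)=0$, then $|\langle a_i, tx_0\rangle|=0\le 1$ for every scalar $t$, so the line $\mathbb R x_0$ would lie in $P$, contradicting the fact that $P$ is a bounded $d$-polytope. Thus $a_1,\dots,a_f$ span $\mathbb R^d$ and $T$ is injective. Combining the two steps, $P$ is linearly isomorphic to the $d$-dimensional section $T(\mathbb R^d)\cap[-1,1]^f$ of the $f$-cube.

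There is no real obstacle beyond checking injectivity; the content of the lemma is essentially the translation between the half-space (H-) description of $P$ and the section-of-a-cube description. The hypothesis $f\ge d$ is automatically satisfied here because we need at least $d$ linearly independent normals to bound a $d$-polytope, so it enters only implicitly through the requirement that $P$ be full-dimensional and bounded.
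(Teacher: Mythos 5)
Your proof is correct and is exactly the standard slab/H-representation argument that the paper itself omits, citing \cite{Ball} instead: central symmetry and full-dimensionality put the origin in the interior, so $P=\{x:|\langle a_i,x\rangle|\le 1\}$, and the map $x\mapsto(\langle a_i,x\rangle)_i$ is injective by boundedness, identifying $P$ with the section $T(\mathbb{R}^d)\cap[-1,1]^f$. No gaps; your remark that $f\ge d$ is forced by boundedness is also accurate.
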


\begin{proof}[Proof of Corollary~\ref{poly}]
By Lemma~\ref{section}, $P$ can be obtained as an $d$-dimensional section of the $\left (\frac{4d}{3}-\frac{1+\sqrt{8d+9}}{6}\right )$-dimensional cube. Choose $n=\frac{4d}{3}-\frac{1+\sqrt{8d+9}}{6}$, $\ell=2$ and $k=\frac{d}{3}-\frac{d+\sqrt{8d+9}}{6}$, and apply inequality \eqref{third} from Theorem~\ref{main}. This yields $e(X)\geq d+1$.
\end{proof}

To confirm Petty's conjecture for subspaces of $\ell_{\infty}^n$ of codimension $2$ and $3$ when $n\geq 9$ and respectively $n\geq 15$, apply inequality \eqref{second} from Theorem \ref{main} with $\ell=2$.

\section{Large equilateral sets}

\subsection*{Notation}

We denote vectors by bold lowercase letters, and the $i$-th coordinate of a vector $\mathbf{a} \in \mathbb{R}^n$ by $a^i$. We treat vectors by default as column vectors. By \emph{subspace} we mean linear subspace. We write $\Span{(\mathbf{a}_1,\dots, \mathbf{a}_k)}$ for the subspace spanned by $\mathbf{a_1},\dots, \mathbf{a_k}\in \mathbb{R}^n$. For a subspace $X\subseteq \mathbb{R}^n$ we denote by $X^{\bot}$ the orthogonal complement of $X$. We denote by $[n]$ the set $\{1,\dots, n\}$, by $2^{[n]}$ the set of all subsets of $[n]$, by $\binom{S}{m}$ the set of all subsets of $S$ of cardinality $m$, and by $\binom{S}{\leq m}$ the set of all non-empty subsets of $S$ of cardinality at most $m$. Further, for $j\in \mathbb{R}$ and $S\subseteq \mathbb{R}$ let $j+S=\setbuilder{j+s}{s\in S}$. $\mathbf{0}$ denotes the vector $(0,\dots,0)\in \mathbb{R}^n$. For two vectors $\mathbf{a}$ and $\mathbf{b}$, let $\mathbf{a}\cdot \mathbf{b}=\sum_{i=1}^n a^ib^i$ be their scalar product.

\subsection*{Idea of the constructions}

For two vectors $\mathbf{x},\mathbf{y}\in X$ we have $\|\mathbf{x}-\mathbf{y}\|_{\infty}=c$ if and only if the following hold.
\begin{flalign}
 \label{one} \textrm{There is an } 1\leq i \leq n\textrm{ such that } |x^i-y^i|=c, \textrm{ and}
\end{flalign}
\begin{flalign}\label{max one}
 |x^i-y^i|\leq c\textrm{ for all } 1\leq i \leq n. 
\end{flalign}
In our constructions of $c$-equilateral sets $S\subseteq X$, we split the index set $[n]$ of the coordinates into two parts $[n]=N_1\cup N_2$. In the first part $N_1$, we choose all the coordinates from the set $\{0,1,-1\}$, so that for each pair from $S$ there will be an index in $N_1$ for which \eqref{one} holds, and \eqref{max one} is not violated by any index in $N_1$.  We use $N_2$ to ensure that all of the points we choose are indeed in the subspace $X$. For each vector, this will lead to a system of linear equations. The main difficulty will be to
choose the values of the coordinates in $N_1$ so that the coordinates in $N_2$, obtained as a solution to those systems of linear equations, do not violate \eqref{max one}.

\subsection*{Proof of Theorem~\ref{main}}

For vectors $\mathbf{v}_1,\dots,\mathbf{v}_k\in \mathbb{R}^k$ let $B(\mathbf{v}_1,\dots,\mathbf{v}_k)\in \mathbb{R}^{k\times k}$ be the matrix whose $i$-th column is $\mathbf{v}_i$. For a matrix $B\in \mathbb{R}^{k\times k}$, a vector $\mathbf{v}\in \mathbb{R}^k$ and an index $i\in [k]$, we denote by $B(i,\mathbf{v})$ the matrix obtained from $B$ by replacing its $i$-th column by $\mathbf{v}$.

Let $\setbuilder{\mathbf{a}_i}{1\leq i \leq k}$ be a set of $k$ linearly independent vectors in $\mathbb{R}^n$ spanning $X^{\bot}$. That is, $\mathbf{x}\in X$ if and only if $\mathbf{a}_i\cdot\mathbf{x}=0$ for all $1\leq i \leq k$. Further, let $A\in \mathbb{R}^{k\times n}$ be the matrix whose $i$-th row is $\mathbf{a}_i^T$, and let $\mathbf{b}_j=(a_1^j,\dots, a_k^j)$ be the $j$-th column of $A$. For $I\subseteq [n]$ and for $\bm{\sigma}\in \{ \pm 1\}^n$ let $\mathbf{b}_{I}=\sum_{i\in I}\mathbf{b}_i$ and $\mathbf{b}_{I,\bm{\sigma}}=\sum_{i\in I}\sigma^i\mathbf{b}_i$.

\begin{proof}[Proof of \eqref{first}]
We construct a $2$-equilateral set of size $\frac{2^{n-k}}{(n-k)^k}$. Let $B=B(\mathbf{b}_{n-k+1},\mathbf{b}_{n-k+2},\dots, \mathbf{b}_{n})$. We may assume without loss of generality that $|\det B |\geq |\det B(\mathbf{b}_{i_1},\dots,\mathbf{b}_{i_k})|$ for all possible choices of $i_1,\dots, i_k\in [n]$. The vectors $\setbuilder{\mathbf{a}_i}{i\in [k]}$ are linearly independent, hence $\det B \ne 0$. The first part of the indices ($N_1$) now will be $[n-k]$, and for these indices we choose coordinates from the set $\{1,-1\}$. For $J\subseteq [n-k]$ we define the first $n-k$ coordinates of the vector $\mathbf{w}(J)\in \mathbb{R}^n$ as

\begin{equation*}
w(J)^i=
\begin{cases*}
1 & if $i\in J$\\
-1 & if $i\in [n-k]\setminus J$.\\
\end{cases*}
\end{equation*}

To ensure that $\mathbf{w}(J)\in X$ we must have $A\mathbf{w}(J)=\mathbf{0}$. This means $(w(J)^{n-k+1},\dots, w(J)^n)$ is a solution of 
\begin{equation}\label{system3}
B\mathbf{x}=\mathbf{b}_{[n-k]\setminus J}-\mathbf{b}_{J}.
\end{equation}
By Cramer's rule $\mathbf{x}=(x^1,\dots,x^k)$ with \[x^i=\frac{\det B(i,\mathbf{b}_{[n-k]\setminus J}-\mathbf{b}_{J})}{\det B}\] is a solution of \eqref{system3}. Thus we obtain that $\mathbf{w}(J)$, defined by
\begin{equation*}
w(J)^i=
\begin{cases*}
1 & if $i\in J$\\
-1 & if $i\in [n-k]\setminus J$\\
\frac{\det B(i-n+k,\mathbf{b}_{[n-k]\setminus J}-\mathbf{b}_{J})}{\det B} & if $i\in [n]\setminus[n-k]$,
\end{cases*}
\end{equation*}
is in $X$.
By the multilinearity of the determinant we have
\[\det B(i-n+k,\mathbf{b}_{[n-k]\setminus J}-\mathbf{b}_{J})=\sum_{j\in [n-k]\setminus J}\det B\left (i-n+k,\mathbf{b}_j\right)-\sum_{j\in J}\det B\left (i-n+k,\mathbf{b}_j\right).
\]
Thus by the maximality of $|\det B|$ and by the triangle inequality:
\[\left |\det B(i-n+k,\mathbf{b}_{[n-k]\setminus J}-\mathbf{b}_{J})\right | \leq (n-k)|\det B|.
\]
This implies that for each $J$ and $i\in [n]\setminus[n-k]$ we have $-(n-k)\leq w(J)^i \leq n-k$.

Consider the set $W=\setbuilder{\mathbf{w}(J)}{J\in 2^{[n-k]}}$. $W$ is not $2$-equilateral, because for \mbox{$J_1,J_2\in 2^{[n-k]}$} and for $i\in [n]\setminus [n-k]$ we only have that $|w(J_1)^i-w(J_2)^i|\leq 2(n-k)$. However we can find a $2$-equilateral subset of $W$ that has large cardinality, as follows.

First we split $W$ into $n-k$ parts such that if $w(J_1)$ and $w(J_2)$ are in the same part, then $\left |w(J_1)^{n-k+1}-w(J_2)^{n-k+1}\right |\leq 2$, and keep the largest part. Then we split the part we kept into two parts again similarly, based on $w(J)^{n-k+2}$, and keep the largest part. We continue in the same manner for $w(J)^{n-k+3},\dots, w(J)^{n}$.

More formally, for each vector $\mathbf{s}\in \{-(n-k),(-n-k)+2,\dots,n-k-2\}^k=T^k$ let $W(\mathbf{s})$ be the set of those vectors $\mathbf{w}(J)$ for which
\[w(J)^{n-k+i}\in [s^i,s^i+2] \textrm{ for every } i\in k.\]
We have $W\subseteq \bigcup_{\mathbf{s}\in T^k} W(\mathbf{s})$, and hence there is an $\mathbf{s}$ for which $|W(\mathbf{s})|\geq \frac{2^{n-k}}{(n-k)^k}$.

It is not hard to check that $W(\mathbf{s})$ is $2$-equilateral. Indeed, for every $J_1,J_2\in W(\mathbf{s})$, we have $|w^i(J_1)-w^i(J_2)|\leq 2$ for $i\in [n]\setminus [n-k]$ by the definition of $W(\mathbf{s})$, and for $i\in [n-k]$ by the definition of $\mathbf{w}(J)$. Further, by the definition of $\mathbf{w}(J)$ there is an index $j\in [n-k]$ for which $\{w(J_1)^j, w(J_2)^j\}=\{1,-1\}$ (assuming $J_1\neq J_2$).
\end{proof}

\begin{proof}[Proof of \eqref{second}] Fix some $1\leq \ell \leq n/(k+1)$. We will construct a $1$-equilateral set of cardinality $\frac{1}{2{^{k-1}}}\sum_{1\leq r \leq \ell}\binom{n-k\ell}{r}+1$. Let $I_1,\dots, I_k\subseteq \binom{[n]}{\leq \ell}$ and $\bm{\sigma}\in \{\pm 1\}^n$ be such that the determinant of $B=B(\mathbf{b}_{I_1, \bm{\sigma}},\dots, \mathbf{b}_{I_k, \bm{\sigma}})$, is maximal among all possible choices of $k$ disjoint $I_1,\dotsm I_k\binom{[n]}{\leq \ell}$ and $\sigma\in\{\pm 1\}^n$. Note that $\det B>0$ since the vectors $\mathbf{a}_1,\dots,\mathbf{a}_k$ are linearly independent. Let $I=\bigcup_{i\in [k]} I_i$ and $|I|=m$. By re-ordering the coordinates, we may assume that $I=[n]\setminus [n-m]$.

The first part of the indices now will be $[n-m]$, and for these indices we choose all the coordinates from the set $\{-1,0,1\}$. For a set $J\in \binom{[n-m]}{\leq \ell}$ we define the first $n-m$ coordinates of the vector $\mathbf{w}(J)\in \mathbb{R}^n$ as 
\begin{equation*}
    {w}(J)^i =
    \begin{cases*}
	-\sigma^i &  if  $i\in J$ \\
	0 & if  $i\in [n-m]\setminus J$.
   \end{cases*}
\end{equation*}
To ensure that $\mathbf{w}(J)\in X$ we must have $A\mathbf{w}(J)=\mathbf{0}$. This means that $(w(J)^{n-m+1},\dots, w(J)^n)$ has to be a solution of 
\begin{equation}\label{system}
B(\mathbf{b}_{n-m+1},\mathbf{b}_{n-m+2},\dots,\mathbf{b}_n)\mathbf{x}=\mathbf{b}_{J,\sigma}.
\end{equation}

We will find a solution of \eqref{system} of a specific form, where for each $j\in [k]$, if $i_1,i_2\in I_j$, then $\sigma^{i_1}x^{i_1}=\sigma^{i_2}x^{i_2}$. For this, let $\mathbf{y}=(y^1,y^2,\dots,y^k)$ be a solution of 
\begin{equation*}\label{system2}
B\mathbf{y}=\mathbf{b}_{J,\sigma},
\end{equation*}
and for each $j\in[k]$ and $i\in I_j$ let $x^i=\sigma^iy^j$. Then $(x^{n-m+1}, \dots, x^{n})$  is a solution of \eqref{system}, and by Cramer's rule we have $y^j=\frac{\det B(j,\mathbf{b}_{J,\sigma})}{\det B}$. Thus we obtained that $\mathbf{w}(J)$, defined as 
\begin{equation*}
    {w}(J)^i =
    \begin{cases*}
	-\sigma^i &  if  $i\in J $ \\
	0 & if  $i\in [n-m]\setminus J$ \\
	\frac{\sigma^i\det B(j,\mathbf{b}_{J,\sigma})}{\det B} & if $i\in I_j$ for some $j\in [k]$,
    \end{cases*}
  \end{equation*}
is in $X$. Note that $B(j,\mathbf{b}_{J,\sigma})=B(\mathbf{b}_{J_1,\sigma},\dots,\mathbf{b}_{J_k,\sigma})$ for some disjoint sets $J_1,\dots, J_k$, hence by the maximality of $\det{B}$ we have
\begin{align}\label{condition}
|w(J)^i|\leq 1 \textrm{ for each } 1\leq i \leq n.
\end{align}

Consider the set $W=\setbuilder{\mathbf{w}(J)}{J\in \binom{[n-m]}{\leq \ell }}$. $W$ is not a $1$-equilateral set, because for $J_1,J_2\in \binom{[n-m]}{\leq \ell}$ and for some $i_1\in I_1\cup\dots\cup I_{k-1}$ we only know that $w(J_1)^i, w(J_2)^i\in [-1,1]$, and thus $|w(J_1)^i-w(J_2)^i|\leq 2$. However we can find a $1$-equilateral subset of $W$ that has large cardinality.

First note that we may assume that for any $j\in [n-m]$ we have $\det{B(k,\sigma^j\mathbf{b}_j)}\geq 0$. Indeed, we can ensure this by changing the first $n-m$ coordinates of $\bm{\sigma}$ if necessary.\footnote{This is the only reason why we took the maximum also over $\mathbf{\sigma}$ at the beginning of the proof.} This we may do, since in the definition of $B$ we only used the last $m$ coordinates of $\bm{\sigma}$. Together with the multilinearity of the determinant, this implies that for for $i\in I_k$ we have \begin{equation}\label{kpoz}
\sigma^i w (J)^i=\frac{\det{B(k,\mathbf{b}_{J,\sigma})}}{\det B}=\frac{\det{B(k,\sum_{j\in J}\sigma^j\mathbf{b}_j)}}{\det B}=\frac{\sum_{j\in J}\det{B(k,\sigma^j\mathbf{b}_j)}}{\det B}\geq 0.
\end{equation}

Next we split $W$ into two parts such that if $\mathbf{w}(J_1)$ and $\mathbf{w}(J_2)$ are in the same part, then for $i\in I_1$, $\mathbf{w}(J_1)^i$ and $\mathbf{w}(J_2)^i$ have the same sign, and we keep the largest part. Then we split that part into two parts again similarly, based on $I_2$, and keep the largest part. We continue in the same manner for $I_3,\dots, I_{k-1}$.

More formally, for each vector $\mathbf{s}\in \{\pm 1\}^{k-1}$ let $W(\mathbf{s})\subseteq W$ be the set of those vectors $\mathbf{w}(J)\in W$ for which
\[
s^jw(J)^i\sigma^i\geq 0 \textrm{ for each } i\in I_1\cup\dots\cup I_{k-1}\textrm{, where } j\in [k-1] \textrm{ is such that } i\in I_j.
\]
Then $\bigcup_{\mathbf{s}\in \{\pm1\}^{k-1}} W(\mathbf{s})$ is a partition of $W$, hence there is an $\mathbf{s}$ for which $|W(\mathbf{s})|\geq \frac{1}{2^{k-1}}|W|=\frac{1}{2^{k-1}}\sum_{1\leq r \leq \ell}\binom{n-m}{r}\geq \frac{1}{2{^{k-1}}}\sum_{1\leq r \leq \ell}\binom{n-k\ell}{r}.$ $W(\mathbf{s})$ is a $1$-equilateral set, because for any two vectors $\mathbf{w}_1,\mathbf{w}_2\in W(\mathbf{s})$, there is an index $i\in [n-m]$ for which either $\{w_1^i, w_2^i\}=\{0,-1\}$ or $\{w_1^i, w_2^i\}=\{0,1\}$, and for all $i\in [n]$ we have $|w_1^i-w_2^i|\leq 1$ by \eqref{condition}, by the definition of $W(\mathbf{s})$ and by \eqref{kpoz}. Finally, it is not hard to see that we can add $\mathbf{0}$ to $W(\mathbf{s})$. Thus $W(\mathbf{s})\cup \{\mathbf{0}\}$ is a $1$-equilateral set of the promised cardinality.
\end{proof}

\medskip

\begin{proof}[Proof of \eqref{third}]
Fix some $1\leq \ell \leq n/(2k+1)$ and let $N=n-2k\ell$. We will construct a $1$-equilateral set of cardinality $\sum_{1\leq r\leq \ell}\binom{N}{r}+1$.
For $1\leq i \leq 2\ell$ let 
\[U_i=(i-1)k+[k]=\{(i-1)k+1,(i-1)k+2,\dots,ik\}\]
and 
\[B_i=B(\mathbf{b}_{N+(i-1)k+1},\mathbf{b}_{N+(i-1)k+2}\dots, \mathbf{b}_{N+ik}).\]
By working from $2\ell$ down to $1$, we may assume without loss of generality that for $1\leq i \leq 2\ell$
\begin{equation}\label{2l det}
|\det B_i| \geq |\det B_i(j,\mathbf{b}_r)| \textrm{ for all } j\in[k] \textrm{ and } r\leq N+(i-1)k.
\end{equation}
Assume now that \begin{equation}\label{nonzero}
|\det B_{i}|>0 \textrm{ for all } 1\leq i \leq 2 \ell.
\end{equation}
We will handle the case when this assumption does not hold at the end of the proof.

The first part of the indices now will be $[N]$. We will obtain vectors (denoted by $\mathbf{y}(J)$) whose coordinates corresponding to the first part are from the set $\{0,-1\}$, and whose coordinates from the second part have absolute value at most $\frac{1}{2}$. We do not construct them directly, but  as the sum of some other vectors $\mathbf{w}(J,i), \mathbf{z}(J,i)\in X$, whose coordinates in the first part are from $\{0,-\frac{1}{2}\}$.

For a set $J=\{j_1,\dots,j_{|J|} \}\in \binom{[N]}{\leq \ell}$ with $j_1<\dots <j_{|J|}$, and for $1\leq i\leq |J|$ let us define the first $N$ coordinates of $\mathbf{w}(J,i)\in \mathbb{R}^n$ and $\mathbf{z}(J,i)\in \mathbb{R}^n$ as

\begin{equation*}
w(J,i)^j =z(J,i)^j=
\begin{cases*}
	-\frac{1}{2} & if $j=j_i$\\
	0  & if $j\in [N]\setminus \{j_i\}$.\\
\end{cases*}
\end{equation*}

To ensure that $\mathbf{w}(J,i)$ and $\mathbf{z}(J,i)$ are in $X$, we must have $A\mathbf{w}(J,i)=A\mathbf{z}(J,i)=\mathbf{0}$. Hence both $\left (w(J,i)^{N+1},w(J,i)^{N+2}\dots, w(J,i)^n\right )$ and $\left (z(J,i)^{N+1},z(J,i)^{N+2}\dots, z(J,i)^n\right )$ are solutions of 
\begin{equation}\label{system4}
B\mathbf{x}=\frac{1}{2}\mathbf{b}_{j_i},
\end{equation}
where $B=(\mathbf{b}_{N+1},\mathbf{b}_{N+2},\dots,\mathbf{b}_{n})$.

By Cramer's rule we have that $\mathbf{x}=(x^1,x^2\dots,x^{2k\ell})$ with
\begin{equation*}
x^j=
\begin{cases*}
	0 & if $j\in [2k\ell]\setminus U_{2i}$\\
	\frac{\det B_{2i} \left (j-(2i-1)k,\frac{1}{2}\mathbf{b}_{j_i} \right )}{\det B_{2i}} & if  $j\in U_{2i}$
\end{cases*}
\end{equation*}
is a solution of \eqref{system4}.

We obtain that $\mathbf{w}(J,i)$ defined as 
\begin{equation*}
w(J,i)^j =
\begin{cases*}
	-\frac{1}{2} & if $j=j_i$\\
	0  & if $j\in [n]\setminus (\{j_i\}\cup (N+U_{2i}))$ \\
	\frac{\det B_{2i} \left (j-N-(2i-1)k,\frac{1}{2}\mathbf{b}_{j_i} \right )}{\det B_{2i}} & if  $j\in N+U_{2i}$
\end{cases*}
\end{equation*}
is in $X$.

Similarly, by Cramer's rule we have that $\mathbf{x}=(x^1,x^2\dots,x^{2k\ell})$ with
\begin{equation*}
x^j=
\begin{cases*}
	0 & if $j\in [2k\ell]\setminus U_{2i-1}$\\
	\frac{\det B_{2i-1} \left (j-(2i-2)k,\frac{1}{2}\mathbf{b}_{j_i} \right )}{\det B_{2i-1}} & if  $j\in U_{2i-1}$
\end{cases*}
\end{equation*}
is a solution of \eqref{system4}.

We obtain that $\mathbf{z}(J,i)$ defined as 
\begin{equation*}
z(J,i)^j =
\begin{cases*}
	-\frac{1}{2} & if $j=j_i$\\
	0  & if $j\in [n]\setminus (\{j_i\}\cup (N+U_{2i-1}))$ \\
	\frac{\det B_{2i-1} \left (j-N-(2i-2)k,\frac{1}{2}\mathbf{b}_{j_i} \right )}{\det B_{2i-1}} & if  $j\in N+U_{2i-1}$
\end{cases*}
\end{equation*}
is in $X$.

Therefore $\mathbf{y}(J)=\sum_{1\leq i \leq |J|} (\mathbf{w}(J,i)+\mathbf{z}(J,i))\in X$. Note that by assumption \eqref{2l det} and by the multilinearity of the determinant we have  $|w(J,i)^j|,|z(J,i)^j|\leq \frac{1}{2}$ for all $1\leq j \leq n$. It is not hard to check that by the construction we have
\begin{alignat*}{3}
y(J)^i & = -1 & \phantom{=} & \textrm{ if }  i\in J, \\[2pt]
y(J)^i & = 0 & \phantom{=} &  \textrm{ if }  i\in [N]\setminus J, \\[2pt]
|y(J)^i| & \leq \frac{1}{2} &  \phantom{=} & \textrm{ if } i\in [n]\setminus [N]. 
\end{alignat*}

Thus, for any two distinct $J_1,J_2\in \binom{[N]}{\leq \ell}$, there is an $i\in [N]$ for which $\{y(J_1)^i,y(J_2)^i\}=\{0,-1\}$, and for all $1\leq i \leq n$ we have $|y(J_1)^i-y(J_2)^i|\leq 1$. This means $||\mathbf{y}(J_1)-\mathbf{y}(J_2)||_{\infty}=1$, and $\setbuilder{\mathbf{y}(J)}{J\in \binom{[N]}{\leq \ell}}\cup\{\mathbf{0}\}$ is a $1$-equilateral set of cardinality $\sum_{1\leq r \leq \ell}\binom{N}{r}+1$.

To finish the proof it is only left to handle the case when assumption \eqref{nonzero} does not hold. For $S=\{s_1,\dots,s_r\}\subseteq [n]$ with $s_1<\dots <s_r$ and $T=\{t_1,\dots,t_m\}\subseteq [k]$ with $t_1<\dots <t_m$ let
\begin{equation*}B(S,T)=
\begin{pmatrix}
b_{s_1}^{t_1} & \dots & b_{s_r}^{t_1}\\
\vdots & \ddots & \vdots \\
b_{s_1}^{t_m} & \dots & b_{s_r}^{t_m}
\end{pmatrix}\in \mathbb{R}^{r\times m}.
\end{equation*}
We recursively define $m_i\in \mathbb{N}$, $B_i\in \mathbb{R}^{m_i\times m_i}$ for $i\in[2\ell]\cup \{0\}$, and $M_i\in \mathbb{N}$ for $i\in[2\ell]$ as follows. Let $m_1=k$, $M_0=0$, $M_1=m_1$ and $B_1=B([n]\setminus [n-m_1],[k])$. 
By changing the order of $A$, we may assume that 
\begin{equation}\label{max0}
\left | \det B_1\right |\geq \left | \det B(S,[k])\right |
\textrm{ for all } S\in\binom{[n]}{m_1}.
\end{equation}
Assume now that we have already defined $m_{i-1}$, $M_{i-1}$ and $B_{i-1}$. If $m_{i-1}>0$, then let \mbox{$m_i=\rank B([n-M_{i-1}],[k])$}, otherwise let $m_i=0$. If $m_i> 0$, then let $S_i\subseteq \binom{[k]}{m_i}$ such that $\rank B([n-M_{i-1}],S_i)=m_i$, and let $B_i=B([n-M_{i-1}]\setminus [n-M_i],S_i)$. 
Further, let $M_i=M_{i-1}+m_i=\sum_{r\leq i}m_r$. If $m_i>0$, then again, by re-indexing the first $n-M_{i-1}$ columns of $A$,
we may assume that 
\begin{equation}\label{maxi}
|\det B_i|\geq |\det B(S,S_i)| \textrm{ for all } S\subseteq \binom{[n-M_{i-1}]}{m_i}.
\end{equation}
Finally define $\mathbf{b}_j(i)=B(\{j\},S_i)\in \mathbb{R}^{m_i}$.

We now redefine $U_i$ as 
\[U_i=[n-M_{i-1}]\setminus[n-M_i],\]
and redefine $\mathbf{w}(J,i)$ and $\mathbf{z}(J,i)$ as
\begin{equation*}
w(J,i)^j=
\begin{cases*}
-\frac{1}{2}& if $j=j_i$\\
0 & if $j\in [n]\setminus (\{j_i\}\cup U_{2i})$\\
\frac{\det B_{2i} \left (j-n+M_{2i},\frac{1}{2} \mathbf{b}_{j_i}(2i) \right )}{\det B_{2i}} & if $j\in U_{2i}$,
\end{cases*}
\end{equation*}
and
\begin{equation*}
z(J,i)^j=
\begin{cases*}
-\frac{1}{2}& if $j=j_i$\\
0 & if $j\in [n]\setminus (\{j_i\}\cup U_{2i-1})$\\
\frac{\det B_{2i-1} \left (j-n+M_{2i-1},\frac{1}{2} \mathbf{b}_{j_i}(2i-1) \right )}{\det B_{2i-1}} & if $j\in U_{2i-1}$.
\end{cases*}
\end{equation*}

\smallskip

Note that if $m_{2i}=0$ ($m_{2i-1}=0$), then $w(J,i)^j=0$ ($z(J,i)^j=0$) for every $j\neq j_i$, since $U_{2i}=\emptyset$ ($U_{2i-1}=\emptyset$). Further, \mbox{$m_i=\rank B([n-M_{i-1}],[k])=\rank B([n-M_{i-1}]\setminus [n-M_i],S_i)$} implies that $\Span \setbuilder{\left (a_j^1,\dots, a_j^{n-M_{i-1}} \right )}{j\in [k]}=\Span \setbuilder{\left (a_j^1,\dots, a_j^{n-M_{i-1}} \right )}{j\in S_i}$. This means that if $\mathbf{v}\in \mathbb{R}^n$ is a vector for which $v^j=0$ if $j>n-M_{i-1}$, then $\mathbf{v}\cdot \mathbf{a_j}=0$ for all $j\in S_i$ implies $\mathbf{v}\in X$.

Therefore $\mathbf{w}(J,i), \mathbf{z}(J,i)\in X$ for all $i,J$, thus $\mathbf{y}(J)=\sum_{1\leq i \leq |J|} (\mathbf{w}(J,i)+\mathbf{z}(J,i))\in X$. By \eqref{max0} and \eqref{maxi}, and by the multilinearity of the determinant we have  $|w(J,i)^j|,|z(J,i)^j|\leq \frac{1}{2}$ for all $1\leq j \leq n$. The argument that was used under assumption \eqref{nonzero} now gives that $\setbuilder{y(J)}{J\in \binom{[n-2k\ell]}{\leq \ell}}\cup \{\mathbf{0}\}$ is $1$-equilateral of cardinality \mbox{$\sum_{1\leq r\leq \ell}\binom{n-2k\ell}{r}+1=\sum_{1\leq r\leq \ell}\binom{N}{r}+1$.}
\end{proof}

\bigskip

\section{Equilateral sets in normed spaces close to subspaces of $\ell_{\infty}^n$}

The construction we use is similar to the one from \cite{SV08}.
Let us fix $1\leq \ell \leq \frac{n-2k}{k}$, and let $N=n-k(2+\ell)$, and $c=\frac{\ell}{2(N-1)}>0$. We assume that the linear structure of $Y$ is identified with the linear structure of $X$, and
the norm $||\cdot ||_Y$ of $Y$ satisfies
\[||x||_Y\leq ||x||_{\infty} \leq (1+c)||x||_Y
\]
for each $x\in X$. 
Further let $M=\setbuilder{(i,j)}{1\leq i < j \leq N}$. For every \mbox{$\bm{\varepsilon}=\left (\varepsilon_j^i\right)_{(i,j)\in M}\in [0,c]^{M}$} and $j\in N$ we will define a vector $\mathbf{p}_j(\bm{\varepsilon})\in \mathbb{R}^{n}\in Y$ such that
\begin{alignat}{4}\label{vectors1}
p_j(\varepsilon)^i  &= -1&  \phantom{=}  &\text{ if $i=j$},\\[2pt]
\label{vectors1.1}p_j(\varepsilon)^i  &= \varepsilon_j^i& \phantom{=} &\text{ if $i<j$,}\\[2pt]
\label{vectors1.2}p_j(\varepsilon)^i  &= 0& \phantom{=} &\text{ if $i\in[N]\setminus[j]$,} \\[2pt]
\left |p_j(\varepsilon)^i\right | &\leq \frac{1}{2}& \phantom{=} &\text{ if $i\in[n]\setminus[N]$\label{vectors2}.}
\end{alignat}
Conditions \eqref{vectors1}$-$\eqref{vectors2} imply that $||\mathbf{p}_s(\bm{\varepsilon})-\mathbf{p}_t(\bm{\varepsilon})||_{\infty}=1+\varepsilon_t^s$ for every $1\leq s <t \leq N$.

Define $\varphi:[0,c]^{M}\to \mathbb{R}^{M}$ by 
\[\varphi_j^i(\bm{\varepsilon})=1+\varepsilon_j^i-\|\mathbf{p}_i(\bm{\varepsilon})-\mathbf{p}_j(\bm{\varepsilon})\|_Y,\]
for every $1\leq i <j \leq N$.
From
\begin{multline*}
0=1+\varepsilon_j^i-\|\mathbf{p}_i(\bm{\varepsilon})-\mathbf{p}_j(\bm{\varepsilon})\|_{\infty} 
\leq \varphi_j^i(\bm{\varepsilon})=1+\varepsilon_j^i-\|\mathbf{p}_i(\bm{\varepsilon})-\mathbf{p}_j(\bm{\varepsilon})\|_Y \\[4pt]
\leq
1+\varepsilon_j^i- (1+c)^{-1}\|\mathbf{p}_i(\bm{\varepsilon})-\mathbf{p}_j(\bm{\varepsilon})\|_{\infty}  
\leq c,
\end{multline*}
it follows that the image of $\varphi$ is contained in $[0,c]^M$. Since $\varphi$ is continuous, by Brouwer's fixed point theorem $\varphi$ has a fixed point $\bm{\varepsilon}_0\in [0,c]^M$. Then $\setbuilder{\mathbf{p}_j(\bm{\varepsilon}_0)}{j\in [N]}$ is a $1$-equilateral set in $Y$ of cardinality $N=n-k(2+\ell)$.

To finish the proof, we only have to find vectors $\mathbf{p}_j(\bm{\varepsilon})$ that satisfy conditions \eqref{vectors1}$-$\eqref{vectors2}. We construct them in a similar way as the equilateral sets in the proof of Theorem \ref{main}.

For $1\leq i \leq 2+\ell$ let
\[U_i=(i-1)k+[k],\] and
\[B_i=B(\mathbf{b}_{n-ik+1},\mathbf{b}_{n-ik+2},\dots, \mathbf{b}_{n-(i-1)k}).\]
By working from $2+\ell$ down to $1$ we may assume without loss of generality that for $1\leq i \leq 2+\ell$
\begin{equation}\label{close max}
|\det B_i| \geq |\det B(\mathbf{b}_{i_1},\dots, \mathbf{b}_{i_k})|
\end{equation}
for all choices of $1\leq i_1< \dots <i_k \leq n-(i-1)k$ for which $\left |\{i_1,\dots,i_k\}\cap \left ([n]\setminus[n-2\ell] \right) \right |\leq 1$. 
Assume now that \begin{equation*}
|\det B_{i}|>0 \textrm{ for all } 1\leq i \leq 2+\ell.
\end{equation*}
We can handle the case when this assumption does not hold in a similar way as the case in the proof of inequality \eqref{third} in Theorem \ref{main} when assumption \eqref{nonzero} did not hold. Therefore we omit the details.

We construct $\mathbf{p}_j(\bm{\varepsilon})$ as a sum of $2+\ell$ other vectors $\mathbf{p}_j(\bm{\varepsilon},1),\mathbf{p}_j(\bm{\varepsilon},2), \dots, \mathbf{p}_j(\bm{\varepsilon},2+\ell)$, where $\mathbf{p}_j(\bm{\varepsilon},1)$ is defined as follows.

For $m\in\{1,2\}$ let
\begin{equation*}
p_j(\bm{\varepsilon},m)^i=
\begin{cases*}
-\frac{1}{2}& if $i=j$\\
0 & if $i\in [n]\setminus (\{j\}\cup N+U_m)$\\
\frac{\det B_m\left (i-N-km, \frac{1}{2} \mathbf{b}_j \right )}{\det B_m} & if $i\in N+U_m$,
\end{cases*}
\end{equation*}
and for $m\in\{3,\dots, 2+\ell\}$ let
\begin{equation*}
p_j(\bm{\varepsilon},m)^i=
\begin{cases*}
\frac{\varepsilon_j^i}{\ell} & if $i<j$\\
0 & if $i\in [n]\setminus ([j-1]\cup N+U_m)$ \\
\frac{\det B_m (i-N-km,-\mathbf{s}(\bm{\varepsilon},j))}{\det B_m} & if $i\in N+U_m$.
\end{cases*}
\end{equation*}
where $\mathbf{s}(\bm{\varepsilon},j)=\sum_{r<j}\frac{\varepsilon_j^r}{\ell}\mathbf{b}_r$. As before, by Cramer's rule we have $p_j(\varepsilon)(m)\in Y$ for all $m\in[2+\ell]$, and thus $\mathbf{p}_j(\bm{\varepsilon})=\sum_{m\in [2+\ell]}\mathbf{p}_j(\bm{\varepsilon},m)\in Y$. It follows immediately that $\mathbf{p}_j(\bm{\varepsilon})$ satisfies conditions \eqref{vectors1}$-$\eqref{vectors1.2}.

Further, by the multilinearity of the determinant, \eqref{close max}, and the triangle inequality for \mbox{$m\in \{1,2\}$} we have 
\begin{equation*}
|p_j(\bm{\varepsilon},m)^i| = \left | \frac{\det B_m\left (i-N-km, \frac{1}{2} \mathbf{b}_j \right )}{\det B_m}\right |\leq \frac{1}{2}.
\end{equation*}
and for every $m\in\{3,\dots,2+l\}$ we have
\begin{multline*}
\left |p_j(\bm{\varepsilon},m)^i\right |  = \left |\frac{\det B_m (i-N-km,-\mathbf{s}(\bm{\varepsilon},j))}{\det B_m} \right|
\leq \left |\sum_{r<j} \frac{\varepsilon_j^r}{\ell}\frac{\det B_m\left (i-N-km, -\mathbf{b}_r \right)}{\det B_m}\right |\\[5pt]
 \leq 
\sum_{r<j} \frac{\varepsilon_j^r}{\ell}\left |\frac{\det B_m\left (i-N-km, -\mathbf{b}_r\right )}{\det B_m}\right|
 \leq
\sum_{r<j} \frac{\varepsilon_j^r}{\ell}\leq (N-1)\frac{c}{\ell}=\frac{1}{2}.
\end{multline*}
This implies that condition \eqref{vectors2} holds for $\mathbf{p}_j(\bm{\varepsilon})$ as well, finishing the proof.

\subsection*{Acknowledgement}We thank Konrad Swanepoel for bringing our attention to Corollary \ref{poly}, for discussions on the topic, and for helpful comments on the manuscript.

\bibliographystyle{amsalpha}
\bibliography{biblio}

\end{document}